\theoremstyle{plain}
\newtheorem{theorem}{Theorem}[section]
\newtheorem{lemma}[theorem]{Lemma}
\theoremstyle{definition}
\theoremstyle{remark}
\newcommand{\Div}{\operatorname{div}}
\providecommand{\abs}[1]{\lvert#1\rvert}
\providecommand{\Abs}[1]{\left\lvert#1\right\rvert}
\numberwithin{equation}{section}
\newcommand{\bR}{\mathbb{R}}
\newcommand\cD{\mathcal{D}}
\providecommand{\abs}[1]{\lvert#1\rvert}
\providecommand{\Abs}[1]{\left\lvert#1\right\rvert}
\providecommand{\norm}[1]{\lVert#1\rVert}
\newcommand{\aint}{{\int\hspace*{-4.3mm}\diagup}}
\def\dashint{\operatorname%
{\,\,\text{\bf-}\kern-.98em\DOTSI\intop\ilimits@\!\!}}
\begin{document}

\title[Interface transmission problems]{A simple proof of regularity for $C^{1,\alpha}$ interface transmission problems}

\author[H. Dong]{Hongjie Dong}
\address[H. Dong]{Division of Applied Mathematics, Brown University, 182 George Street, Providence, RI 02912, USA}
\email{Hongjie\_Dong@brown.edu }
%\thanks{H. Dong was partially supported by the NSF under agreement DMS-1600593.}

\begin{abstract}
We give a simple proof of a recent result in \cite{CST20} by Caffarelli, Soria-Carro, and Stinga about the $C^{1,\alpha}$ regularity of weak solutions to transmission problems with $C^{1,\alpha}$ interfaces. Our proof does not use the mean value property or the maximum principle, and also works for more general elliptic systems. Some extensions to $C^{1,\text{Dini}}$ interfaces and to domains with multiple sub-domains are also discussed.
\end{abstract}

\maketitle

\section{Introduction and main results}

In a recent paper \cite{CST20}, Caffarelli, Soria-Carro, and Stinga studied the following transmission problem.  Let $\Omega\in \bR^d$ be a smooth bounded domain with $d \ge 2$, and $\Omega_1$
be a sub-domain of $\Omega$ such that $\Omega_1 \subset\subset\Omega$ and $\Omega_2 = \Omega \setminus \overline{\Omega_1}$. Assume that the interfacial boundary $\Gamma(=\partial\Omega_1)$ between $\Omega_1$ and $\Omega_2$ is $C^{1,\alpha}$ for some $\alpha\in (0, 1)$. Consider the elliptic problem with the transmission conditions
\begin{equation}
                                    \label{eq11.47}
\begin{cases}
\Delta u=0 \quad \text{in}\ \ \Omega_1\cup\Omega_2,\\
u=0\quad \text{on}\ \ \partial\Omega,\\
u|^+_\Gamma=u|^-_\Gamma,\quad \partial_\nu u|^+_\Gamma-\partial_\nu u|^-_\Gamma=g,
\end{cases}
\end{equation}
where $g$ is a given function on $\Gamma$, $\nu$ is the unit normal vector on $\Gamma$ which is pointing inside $\Omega_1$, and $u|^+_\Gamma$ and $u|^-_\Gamma$ (and $\partial_\nu u|_\Gamma^+$ and $\partial_\nu u|_\Gamma^-$) are the left and right limit of $u$ (and its normal derivative, respectively) on $\Gamma$ in $\Omega_1$ and $\Omega_2$.
The main result of \cite{CST20} can be formulated as the following theorem.
\begin{theorem}[Caffarelli, Soria-Carro, and Stinga \cite{CST20}]
                    \label{thm1}
Under the assumptions above, for any $g\in C^\alpha(\Gamma)$, there is a unique weak solution $u\in H^1(\Omega)$ to \eqref{eq11.47}, which is piecewise $C^{1,\alpha}$ up to the boundary in $\Omega_1$ and $\Omega_2$ and satisfies
$$
\|u\|_{C^{1,\alpha}(\overline{\Omega_1})}+\|u\|_{C^{1,\alpha}(\overline{\Omega_2})}\le N\|g\|_{C^\alpha(\Gamma)},
$$
where $N=N(d,\alpha,\Omega,\Gamma)>0$ is a constant.
\end{theorem}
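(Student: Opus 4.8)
The plan is to combine a standard energy argument for existence and uniqueness with a Campanato-type iteration for the regularity, localized near the interface $\Gamma$. Since $u=0$ on $\partial\Omega$, one seeks $u\in H^1_0(\Omega)$ with
$$
\int_\Omega \nabla u\cdot\nabla\phi\,dx=-\int_\Gamma g\,\phi\,d\sigma\quad\text{for all }\phi\in H^1_0(\Omega).
$$
The right-hand side is a bounded linear functional on $H^1_0(\Omega)$ because the trace operator $H^1(\Omega)\to L^2(\Gamma)$ is bounded, while the left-hand side is coercive by the Poincar\'e inequality, so Lax--Milgram gives a unique weak solution together with the a priori bound $\norm{u}_{H^1(\Omega)}\le N\norm{g}_{L^2(\Gamma)}$. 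Away from $\Gamma$ the function $u$ is harmonic, so interior Schauder estimates and, near the smooth boundary $\partial\Omega$, boundary regularity for the Laplacian already yield $C^{1,\alpha}$ control; the entire difficulty is concentrated in a neighborhood of $\Gamma$.

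Next I would localize at a point $x_0\in\Gamma$ and flatten the interface by a $C^{1,\alpha}$ diffeomorphism $\Phi$ sending $\Gamma\cap B_r(x_0)$ into the hyperplane $\{x_d=0\}$ and $\Omega_1,\Omega_2$ into the two half-balls $B_r^\pm$. In the new coordinates $v=u\circ\Phi^{-1}$ solves a divergence-form equation $D_i(a^{ij}D_j v)=0$ on each side with coefficients $a^{ij}\in C^\alpha$ (since $D\Phi\in C^\alpha$), $v$ is continuous across $\{x_d=0\}$, and the conormal fluxes satisfy a jump condition $(a^{dj}D_j v)|^+-(a^{dj}D_j v)|^-=\tilde g$ with $\tilde g\in C^\alpha$. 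The model problem is the constant-coefficient, flat-interface version of this. Its solutions are regular in the tangential directions---difference quotients in $x'=(x_1,\dots,x_{d-1})$ solve the same problem, giving tangential $C^\infty$ bounds---and, crucially, although $D_d v$ itself jumps, the conormal flux $a^{dj}D_j v$ is continuous up to the jump $\tilde g$; freezing this flux lets one recover normal regularity on each side. From this I would extract the key excess-decay estimate for the model solution $w$: there is a piecewise-affine, flux-matching function $\ell$ (linear on each side, continuous across the interface, with the prescribed conormal jump) such that, for small $\theta$,
$$
\frac{1}{|B_{\theta}|}\int_{B_\theta}\abs{\nabla w-\nabla\ell}^2\,dx\le C\theta^2\,\frac{1}{|B_1|}\int_{B_1}\abs{\nabla w}^2\,dx.
$$

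To pass from the model to the genuine solution I would run a perturbation iteration. Comparing $v$ on $B_r$ with the model solution $w$ having the same boundary data but frozen coefficients $a^{ij}(x_0)$ and frozen source $\tilde g(x_0)$, an energy estimate bounds $\norm{\nabla(v-w)}_{L^2(B_r)}$ by the coefficient oscillation and the oscillation of $\tilde g$, both of order $r^\alpha$. Setting
$$
\cE(r)=\inf_{\ell}\frac{1}{|B_r|}\int_{B_r}\abs{\nabla v-\nabla\ell}^2\,dx,
$$
the infimum taken over flux-matching piecewise-affine $\ell$, the decay estimate for $w$ and the comparison together give
$$
\cE(\theta r)\le C\theta^2\cE(r)+C\,r^{2\alpha}\,\norm{\tilde g}_{C^\alpha}^2 .
$$
Fixing $\theta$ small so that $C\theta^2\le\tfrac12\theta^{2\alpha}$ (possible since $\alpha<1$) and iterating yields $\cE(r)\le C r^{2\alpha}$, which by Campanato's characterization of H\"older spaces gives $\nabla v\in C^\alpha$ separately on the two sides up to $\{x_d=0\}$, hence $u\in C^{1,\alpha}(\overline{\Omega_1})\cap C^{1,\alpha}(\overline{\Omega_2})$ after transforming back. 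A covering argument patches the interior, near-$\partial\Omega$, and near-$\Gamma$ estimates into the global bound.

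I expect the main obstacle to be the model flat-interface problem and its excess decay: one must differentiate freely in the tangential directions while carefully exploiting the continuity of the conormal flux to control the normal direction, and one must choose the class of piecewise-affine comparison functions respecting the transmission structure so that the Campanato scheme closes. A pleasant feature of this energy-based route is that it never invokes the maximum principle or the mean value property, so the same argument applies verbatim to elliptic systems and, with $r^\alpha$ replaced by the relevant Dini modulus, to $C^{1,\text{Dini}}$ interfaces.
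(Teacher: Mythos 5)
Your proposal is correct in outline, but it takes a genuinely different route from the paper. The paper never runs a Campanato iteration on the transmission problem itself: it first solves an auxiliary conormal problem $\Delta w=c$ in $\Omega_1$, $\partial_\nu w=g$ on $\Gamma$, $\int_{\Omega_1}w=0$ (with $c$ the compatibility constant), so that $w\in C^{1,\alpha}(\overline{\Omega_1})$ by Lieberman's classical estimate \cite{Lieb87}; testing the weak formulations then shows the transmission problem is \emph{equivalent} to the single equation $\Delta u=\Div(-1_{\Omega_1}\nabla w)+1_{\Omega_1}c$ in all of $\Omega$ with zero Dirichlet data. This removes the surface-supported datum $g\,d\sigma_\Gamma$ entirely, replacing it by a bulk divergence term that is piecewise $C^\alpha$, and the piecewise $C^{1,\alpha}$ bound then follows by quoting known results for equations with piecewise H\"older data and $C^{1,\alpha}$ interfaces \cite{D12,dx} --- results whose proofs contain precisely the flattening, coefficient-freezing, and excess-decay machinery you propose to build by hand. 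So your argument in effect re-proves those composite-material estimates, with the extra wrinkle that the interface term is carried through the iteration and absorbed into the comparison class of flux-matching piecewise-affine functions (your model-problem analysis of that class --- tangential difference quotients killing the constant jump, the equation recovering $D_{dd}$ on each side --- is sound). Both routes work and neither uses the maximum principle; yours buys a self-contained proof, while the paper's reduction buys brevity and makes the systems, multiple-subdomain, and Dini versions (Theorems \ref{thm2}--\ref{thm4}) follow from the same references with no new iteration.

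One caveat on your scheme: the displayed inequality $\cE(\theta r)\le C\theta^{2}\cE(r)+Cr^{2\alpha}\norm{\tilde g}_{C^\alpha}^2$ is not quite what the comparison argument yields. Freezing the coefficients produces an error of order $r^{\alpha}$ times the \emph{full} gradient energy $\bigl(|B_r|^{-1}\int_{B_r}|\nabla v|^2\,dx\bigr)^{1/2}$, not times the excess, and this quantity is not controlled by $\cE(r)$ alone. The same issue is visible in the paper's own Lemma \ref{lem1}, where the term $\omega_A(2r)\norm{Du}_{L^\infty(B^+_{2r}(\bar x))}$ appears in \eqref{eq0750f} and is handled by the iteration argument of \cite{DLK}: one must track the averages $(\nabla v)_{B_{\theta^k r}}$ along the iteration (they form a Cauchy sequence once the excess decays geometrically) or first establish a gradient $L^\infty$ bound, and only then does the scheme close. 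This is routine, but as written your iteration does not close without it.
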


The proof in \cite{CST20} uses the mean value property for harmonic functions and the maximum principle together with an approximation argument. We refer the reader to \cite{CST20} for earlier results about the transmission problem with smooth interfacial boundaries. The main feature of Theorem \ref{thm1} is that $\Gamma$ is only assumed to be in $C^{1,\alpha}$, which is weaker than those in the literature.

In this short paper, we give a simple proof of Theorem \ref{thm1}, which does not invoke the mean value property or the maximum principle, and also works for more general elliptic systems in the form
\begin{equation}\label{systems}
\begin{cases}
\mathcal{L}u:=D_{k}(A^{kl}D_{l}u)=\Div F+f\quad \text{in}\  \Omega_1\cup\Omega_2,\\
u=0\qquad \text{on}\  \partial\Omega,\\
u|^+_\Gamma=u|^-_\Gamma,\quad A^{kl}D_l u\nu_k |^+_\Gamma-A^{kl}D_l u\nu_k|^-_\Gamma=g,
\end{cases}
\end{equation}
where the Einstein summation convention in repeated indices is used,
$$
u=(u^{1},\ldots,u^{n})^{\top},\ \ F_{k}=(F_{k}^{1},\ldots,F_{k}^{n})^{\top},\ \ f=(f^{1},\ldots,f^{n})^{\top},\ \ g=(g^{1},\ldots,g^{n})^{\top}
$$
are (column) vector-valued functions, for $k,l=1,\ldots,d$,
$A^{kl}=A^{kl}(x)$ are $n\times n$ matrices, which are bounded and satisfy the strong ellipticity with ellipticity constant $\kappa>0$:
$$
\kappa|\xi|^{2}\leq A_{ij}^{kl}\xi_{k}^{i}\xi_{l}^{j},\quad|A^{kl}|\leq\kappa^{-1}
$$
for any $\xi=(\xi_{k}^{i})\in\mathbb R^{n\times d}$.

\begin{theorem}
                        \label{thm2}
Assume that $\Omega_1$, $\Omega_2$, and $\Gamma$ satisfy the conditions in Theorem \ref{thm1}, $A^{kl}$ and $F$ are piecewise $C^\alpha$ in $\Omega_1$ and $\Omega_2$, $g\in C^\alpha(\Gamma)$, and $f\in L_\infty(\Omega)$. Then there is a unique weak solution $u\in H^1(\Omega)$ to \eqref{systems}, which is piecewise $C^{1,\alpha}$ up to the boundary in $\Omega_1$ and $\Omega_2$ and satisfies
$$
\sum_{j=1}^2\|u\|_{C^{1,\alpha}(\overline{\Omega_j})}%+\|u\|_{C^{1,\alpha}(\overline{\Omega_2})}
\le N\|g\|_{C^\alpha(\Gamma)}+N\sum_{j=1}^2\|F\|_{C^\alpha(\Omega_j)}+N\|f\|_{L_\infty(\Omega)},
$$
where $N=N(d,n,\kappa,\alpha,\Omega,\Gamma,[A]_{C^\alpha(\Omega_j)})>0$ is a constant.
\end{theorem}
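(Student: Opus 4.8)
The plan is to establish the interior and boundary estimates via a perturbation/freezing argument combined with a $C^{1,\alpha}$ flattening of the interface, and then to obtain existence and uniqueness separately by energy methods. Let me sketch the main steps.

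The plan is to obtain existence and uniqueness by energy methods and to prove the quantitative $C^{1,\alpha}$ bound by a perturbation (Campanato-type) iteration, after flattening the interface by a $C^{1,\alpha}$ change of variables. First I would record the weak formulation: $u\in H^1_0(\Omega)$ is a weak solution of \eqref{systems} if
\[
\int_\Omega A^{kl}D_l u\cdot D_k\phi\,dx=\int_\Omega F_k\cdot D_k\phi\,dx-\int_\Omega f\cdot\phi\,dx-\int_\Gamma g\cdot\phi\,dS
\]
for all $\phi\in H^1_0(\Omega)$, so that the conormal jump condition is encoded automatically by integrating by parts over $\Omega_1$ and $\Omega_2$ separately (the interior normal $\nu$ producing the surface integral of $g$). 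Strong ellipticity makes the bilinear form coercive on $H^1_0(\Omega)$, and boundedness of the data makes the right-hand side a bounded functional, so existence and uniqueness of the $H^1$ solution follow from the Lax--Milgram lemma. Everything that follows is the a priori estimate.

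Next I would localize and flatten. Away from $\Gamma$ the system has piecewise $C^\alpha$ data, so interior Schauder estimates apply in the interior and boundary Schauder estimates apply near $\partial\Omega$ (which is smooth); the content is the estimate at points of $\Gamma$. Fixing $x_0\in\Gamma$ and using $\Gamma\in C^{1,\alpha}$, I would straighten $\Gamma$ near $x_0$ by a $C^{1,\alpha}$ diffeomorphism. Because the Jacobian of such a map is $C^\alpha$, the transformed leading coefficients stay piecewise $C^\alpha$, $F$ stays piecewise $C^\alpha$, $f$ stays bounded, and $g$ stays $C^\alpha$, while the interface becomes the flat hyperplane $\{x_d=0\}$. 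It therefore suffices to prove the estimate for a flat interface with piecewise $C^\alpha$ coefficients.

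The core is a decay lemma for the homogeneous, constant-coefficient, flat-interface problem: if $w$ solves $D_k(A^{kl}_\pm D_l w)=0$ in $B_r^\pm:=B_r\cap\{\pm x_d>0\}$ and satisfies the homogeneous transmission conditions across $\{x_d=0\}$, then $w$ is piecewise smooth with interior estimates on each side. I would prove this by the tangential difference-quotient method: finite differences in the directions $x_1,\dots,x_{d-1}$ preserve both equations and both transmission conditions because the interface is flat, which gives smoothness of $w$ in the tangential directions; the normal second derivative $D_{dd}w$ is then recovered on each side directly from the equation, using that $A^{dd}_\pm$ is invertible by ellipticity. This yields a gradient excess decay of the form
\[
\int_{B_\rho^\pm}\bigl|Dw-(Dw)_{B_\rho^\pm}\bigr|^2\,dx\le C\,(\rho/r)^{d+2}\int_{B_r^\pm}\bigl|Dw-(Dw)_{B_r^\pm}\bigr|^2\,dx,
\]
where $(\,\cdot\,)_{B_\rho^\pm}$ denotes the average over $B_\rho^\pm$; this is the engine of the iteration.

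Finally I would run the perturbation argument at $x_0$. On a ball $B_r$ I freeze the coefficients at $x_0$ and subtract a piecewise-linear background that realizes the frozen conormal jump $g(x_0)$, then compare $u$ with the homogeneous solution $w$ having the same boundary data on $\partial B_r$; a Caccioppoli/energy estimate bounds the $L^2(B_{r/2})$ norm of $D(u-w)$ by $r^\alpha$ times the $C^\alpha$ oscillation of the coefficients and the data norms $\|g\|_{C^\alpha}$, $\|F\|_{C^\alpha}$, $\|f\|_{L_\infty}$ (the bounded $f$ contributing a harmless lower-order power of $r$). Combining this with the decay lemma for $w$ and iterating over dyadic radii shows that the gradient excess of $u$ on each side decays like $r^\alpha$, and Campanato's characterization then upgrades this to $u\in C^{1,\alpha}(\overline{\Omega_1})$ and $u\in C^{1,\alpha}(\overline{\Omega_2})$ with the stated bound. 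I expect the main obstacle to be exactly this interface step: constructing the piecewise-linear background and the comparison solution so that the jump datum $g\in C^\alpha$ and the genuine discontinuity of $Du$ across $\Gamma$ are handled cleanly, since it is precisely the transmission coupling --- and not the classical one-phase Schauder theory --- where the usual tools (and the maximum principle) do not apply directly.
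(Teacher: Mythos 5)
Your proposal is correct in outline, but it follows a genuinely different---and much longer---route than the paper. The paper never flattens the interface and never runs a Campanato iteration: it first solves the auxiliary conormal (Neumann) problem $\Delta w=c$ in $\Omega_1$, $\partial_\nu w=g$ on $\Gamma$, $\int_{\Omega_1}w\,dx=0$, where $c=-|\Gamma|^{-1}\int_\Gamma g$ is the compatibility constant, so that $w\in C^{1,\alpha}(\overline{\Omega_1})$ by classical conormal Schauder theory \cite{Lieb87}. Comparing weak formulations then shows that $u$ solves the transmission problem \eqref{systems} if and only if it solves the ordinary Dirichlet problem $\mathcal{L}u=\Div\tilde F+\tilde f$ in all of $\Omega$ with $\tilde F=1_{\Omega_1\cup\Omega_2}F-1_{\Omega_1}\nabla w$ and $\tilde f=f+1_{\Omega_1}c$: the conormal jump of $1_{\Omega_1}\nabla w$ across $\Gamma$ exactly reproduces the datum $g$. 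Since $\tilde F$ is piecewise $C^\alpha$, the piecewise $C^{1,\alpha}$ bound follows by quoting the known composite-material/laminate results \cite{D12}. Your scheme, by contrast, essentially re-proves those cited results from scratch (flattening, tangential difference quotients for the constant-coefficient flat-interface problem, dyadic iteration) and additionally handles $g$ inside the iteration through a piecewise-linear background with prescribed conormal jump $g(x_0)$. That is a legitimate, standard path, and the ingredients you list are the right ones; what the paper's reduction buys is that the interface difficulty---precisely the step you flag as the main obstacle---is dissolved at the outset, and the same trick transfers verbatim to the multi-subdomain and Dini settings (Theorems \ref{thm3} and \ref{thm4}). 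A simpler way to handle $g$ within your scheme, used in the paper's Lemma \ref{lem1}, is to extend $g$ constantly in the normal direction and absorb it into the flux as a divergence-form term, avoiding the background subtraction altogether.

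One caution on your decay lemma: as written, the excess over $B_\rho^{\pm}$ is bounded by the same-side excess over $B_r^{\pm}$. Since the two half-balls are coupled through the transmission conditions, regularity of $w$ on one side up to the flat interface cannot be deduced from one-sided information alone; the correct (and provable) statement bounds each one-sided excess at scale $\rho$ by the \emph{sum} of the excesses over both halves at scale $r$. This does not break your iteration---you simply carry the two-sided quantity---but the lemma should be stated that way.
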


We also consider the transmission problem with multiple disjoint sub-domains $\Omega_1,\ldots,\Omega_M$ with $C^{1,\alpha}$ interfacial boundaries in the setting of \cite{lv,ln}.
As in these papers, we assume that any point $x\in\Omega$ belongs to the boundaries of at most two of the $\Omega_{j}'s$, so that if the boundaries of two $\Omega_{j}$ touch, then they touch on a whole component of such a boundary. Without loss of generality assume that $\Omega_j\subset\subset \Omega,j=1,\ldots,M-1$ and $\partial\Omega\subset\partial\Omega_{M}$. The transmission problem in this case is then given by
\begin{equation}\label{eq2.36}
\begin{cases}
\mathcal{L}u=\Div F+f\quad \text{in}\  \bigcup_{j=1}^M \Omega_j,\\
u=0\qquad \text{on}\ \ \partial\Omega,\\
u|^+_{\partial\Omega_j}=u|^-_{\partial\Omega_j},\ \ A^{kl}D_l u\nu_k |^+_{\partial\Omega_j}-A^{kl}D_l u\nu_k|^-_{\partial\Omega_j}=g_j,\  j=1,\ldots,M-1.
\end{cases}
\end{equation}
In the following theorem, we obtain an estimate which is independent of the distance of interfacial boundaries, but may depend on the number of sub-domains $M$.

\begin{theorem}
                        \label{thm3}
Assume that $\Omega_j$ satisfy the conditions above, $A^{kl}$ and $F$ are piecewise $C^{\alpha'}$ for some $\alpha'\in (0,\alpha/(1+\alpha)]$, $g_j\in C^{\alpha'}(\partial\Omega_j),j=1,\ldots,M-1$, and $f\in L_\infty(\Omega)$. Then there is a unique weak solution $u\in H^1(\Omega)$ to \eqref{eq2.36}, which is piecewise $C^{1,\alpha'}$ up to the boundary in $\Omega_j,j=1,\ldots,M,$ and satisfies
$$
\sum_{j=1}^j\|u\|_{C^{1,\alpha'}(\overline{\Omega_j})}%+\|u\|_{C^{1,\alpha}(\overline{\Omega_2})}
\le N\sum_{j=1}^{M-1}\|g\|_{C^{\alpha'}(\partial\Omega_j)}+N\sum_{j=1}^M\|F\|_{C^{\alpha'}(\Omega_j)}
+N\|f\|_{L_\infty(\Omega)},
$$
where $N=N(d,n,M,\kappa,\alpha,\Omega_j,[A]_{C^{\alpha'}(\Omega_j)})>0$ is a constant.
\end{theorem}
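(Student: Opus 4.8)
The plan is to first dispose of existence and uniqueness by the standard energy method, and then to establish the a priori piecewise $C^{1,\alpha'}$ bound by a Campanato-type excess-decay argument in which the two-domain result, Theorem \ref{thm2}, is used as a black box at small scales. For existence and uniqueness I would write \eqref{eq2.36} in weak form: find $u\in H^1_0(\Omega)$ with
$$\int_\Omega A^{kl}D_lu\cdot D_k\varphi\,dx=-\int_\Omega F_k\cdot D_k\varphi\,dx+\int_\Omega f\cdot\varphi\,dx-\sum_{j=1}^{M-1}\int_{\partial\Omega_j}g_j\cdot\varphi\,dS$$
for every $\varphi\in H^1_0(\Omega)$, so that the flux-jump conditions become natural conditions built into the bilinear form. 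The strong ellipticity gives the coercivity $\int_\Omega A^{kl}D_lu\cdot D_ku\,dx\ge\kappa\|Du\|_{L^2(\Omega)}^2$, which together with the Poincar\'e inequality (valid since $u=0$ on $\partial\Omega$) makes the form coercive on $H^1_0(\Omega)$; Lax--Milgram then yields a unique weak solution with $\|u\|_{H^1}\lesssim\mathrm{data}$. This step is routine and insensitive to the multi-domain geometry.

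The substance is the a priori estimate. By a finite covering of $\overline\Omega$ it suffices to prove a scale-invariant local estimate at each point $x_0$: in the interior of a single $\Omega_j$ this is classical interior Schauder theory, near $\partial\Omega\subset\partial\Omega_M$ it is classical boundary Schauder theory, so the only new case is $x_0$ on an interface. By hypothesis at most two of the $\Omega_j$ share any boundary point, so near $x_0$ the geometry consists of a single $C^{1,\alpha}$ interface $\Gamma_0=\partial\Omega_i\cap\partial\Omega_j$ through $x_0$, possibly accompanied by one further, nearly parallel, $C^{1,\alpha}$ interface at some small distance $\delta>0$ bounding a thin slab of a third sub-domain (the case that makes the estimate distance-sensitive). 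I would then study the excess
$$E(x_0,r)=\sum_m\inf_{P\text{ affine}}\Big(\frac{1}{r^d}\int_{B_r(x_0)\cap\Omega_m}|Du-DP|^2\,dx\Big)^{1/2},$$
summed over the pieces meeting $B_r(x_0)$, and aim to prove a decay $E(x_0,\theta r)\le\theta^{\alpha'}E(x_0,r)+Cr^{\alpha'}\,\mathrm{data}$ for a fixed $\theta\in(0,1)$, which by the standard iteration gives $Du\in C^{\alpha'}$ on each piece up to $\Gamma_0$ together with the quantitative bound. Summing the resulting local estimates over the covering, whose cardinality is controlled by $M$, produces the stated $M$-dependent constant.

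The heart of the matter—and the main obstacle—is obtaining this decay \emph{uniformly in the separation} $\delta$. I would split according to the size of $r$ relative to $\delta$ and to the threshold $r_*:=\delta^{1/(1+\alpha)}$. For $r\le\delta$ the ball $B_r(x_0)$ meets only the two sub-domains adjacent to $\Gamma_0$ (the second interface lies outside $B_r$), so the configuration is a genuine two-domain transmission problem and the localized, rescaled form of Theorem \ref{thm2}—whose constant depends only on $d,n,\kappa,\alpha$ and the $C^{1,\alpha}$ norm of $\Gamma_0$, and \emph{not} on $\delta$—delivers the clean rate-$\alpha$ decay. For $r\ge\delta$ both interfaces are present, and I would freeze the coefficients and flatten at $x_0$, comparing $u$ on $B_r(x_0)$ with the solution $v$ of the corresponding constant-coefficient, flat-interface model problem—a flat slab when $\delta\le r\le r_*$, and the merged single-interface two-domain problem when $r\ge r_*$, where the slab is negligibly thin—for which piecewise-affine gradients and rate-$\alpha$ decay are explicit.

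The perturbation error $\|Du-Dv\|_{L^2(B_r)}$ then comes from two kinds of sources: the $C^{\alpha'}$ oscillation of $A$, $F$ and $g_j$, which costs $r^{\alpha'}$; and the mismatch between the true $C^{1,\alpha}$ interfaces and their tangent planes, which over $B_r$ is of size $r^{1+\alpha}$ and must be measured against the slab width $\delta$. Balancing the interface curvature against the separation, $r^{1+\alpha}\sim\delta$, is exactly what singles out the threshold $r_*=\delta^{1/(1+\alpha)}$ and forces the loss from $\alpha$ to $\alpha'=\alpha/(1+\alpha)$. Making this balancing quantitative—so that the accumulated error over the dyadic scales down to $r\sim\delta$ stays within the budget $r^{\alpha'}\,\mathrm{data}$ and the two regimes match at $r_*$—is the delicate part of the proof and the only place where the geometry of two nearby interfaces genuinely enters; once it is in hand, combining the interface decay with the interior and outer-boundary Schauder estimates and summing over the covering completes the argument.
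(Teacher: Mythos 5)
Your Lax--Milgram step is fine, but the core of your argument has a genuine gap --- one you acknowledge yourself. The uniform-in-$\delta$ excess decay at two (or more) nearby interfaces is not a ``delicate part'' that can be deferred: it \emph{is} the theorem. It is exactly the content of the composite-material gradient estimates of Li--Vogelius \cite{lv}, Li--Nirenberg \cite{ln}, and Dong--Xu \cite{dx}, and your sketch stops precisely where those proofs begin. Moreover, your geometric reduction is too optimistic: the standing hypothesis only says that each point lies on at most two of the $\partial\Omega_j$; it does not prevent a ball $B_r(x_0)$ from meeting several pairwise disjoint interfaces at different separations $\delta_1<\delta_2<\cdots$, so a single threshold $r_*=\delta^{1/(1+\alpha)}$ cannot organize the iteration and a genuinely multi-scale argument is needed (this is what makes the cited papers long). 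Two further unproven steps: Theorem \ref{thm2} is a global statement for a closed interface $\Gamma$ with a Dirichlet condition on $\partial\Omega$, so the ``localized, rescaled form'' with constants depending only on the local $C^{1,\alpha}$ character of $\Gamma_0$ requires its own cutoff argument; and the flux jump $g_j$ enters the $L^2$ excess through a \emph{surface} integral, which your sketch never converts into the claimed bulk error of size $r^{\alpha'}$.

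The paper's proof avoids all of this by never re-proving any decay estimate. In each inner sub-domain $\Omega_j$ one solves the auxiliary conormal problem $\Delta w_j=c_j$ in $\Omega_j$, $\partial_\nu w_j=g_j$ on $\partial\Omega_j$, with $c_j=-|\partial\Omega_j|^{-1}\int_{\partial\Omega_j}g_j$ the compatibility constant, obtaining $\|w_j\|_{C^{1,\alpha'}(\Omega_j)}\le N\|g_j\|_{C^{\alpha'}(\partial\Omega_j)}$ from classical conormal Schauder theory \cite{Lieb87}. In the weak formulation, the transmission problem \eqref{eq2.36} is then \emph{equivalent} to the ordinary divergence-form system $\mathcal{L}u=\Div\tilde F+\tilde f$ in $\Omega$ with $u=0$ on $\partial\Omega$, where $\tilde F=1_{\cup_j\Omega_j}F-\sum_{j=1}^{M-1}1_{\Omega_j}\nabla w_j$ and $\tilde f=f+\sum_{j=1}^{M-1}1_{\Omega_j}c_j$: the flux jumps have been absorbed into a piecewise $C^{\alpha'}$ divergence term. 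At that point the known composite-material estimate \cite[Corollary 1.2 and Remark 1.4]{dx}, whose constant is already independent of the distances between interfaces, finishes the proof. If you insist on your route you would in effect have to reproduce the proof in \cite{dx}; the efficient fix is to keep your weak formulation but perform this reduction and cite that result.
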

It is worth noting that in the special case when $A^{\alpha\beta}$ and $F$ are H\"older continuous in the whole domain, by the linearity the result of Theorem \ref{thm3} still holds with $\alpha'=\alpha$.

Our last result concerns the case when the interfaces are $C^{1,\text{Dini}}$, and $A^{kl}$ satisfy the  piecewise $L_2$-Dini mean oscillation in $\Omega$, i.e.,
\begin{align*}%\label{def omega A}
\omega_{A}(r):=\sup_{x_{0}\in \Omega}\inf_{\bar{A}\in\mathcal{A}}
\bigg(\aint_{\Omega_{r}(x_{0})}|A(x)-\bar{A}|^2\ dx\bigg)^{1/2}
\end{align*}
satisfies the Dini condition, where $\Omega_r(x_0)=B_r(x_0)\cap \Omega$ and $\mathcal{A}$ is the set of piecewise constant functions in $\Omega_{j},j=1,\ldots,M$.

\begin{theorem}
                        \label{thm4}
Assume that $\Omega_j$ satisfy the $C^{1,\text{Dini}}$ condition, $A^{kl}$ and $F$ are of piecewise $L_2$-Dini mean oscillation in $\Omega$, $g_j$ is Dini continuous on $\partial\Omega_j,j=1,\ldots,M-1$, and $f\in L_\infty(\Omega)$. Then there is a unique weak solution $u\in H^1(\Omega)$ to \eqref{eq2.36}, which is piecewise $C^{1}$ up to the boundary in $\Omega_j,j=1,\ldots,M$.% and satisfies
%$$
%\sum_{j=1}^j\|u\|_{C^{1,\alpha'}(\overline{\Omega_j})}%+\|u\|_{C^{1,\alpha}(\overline{\Omega_2})}
%\le N\sum_{j=1}^{M-1}\|g\|_{C^{\alpha'}(\partial\Omega_j)}+N\sum_{j=1}^M\|F\|_{C^{\alpha'}(\Omega_j)}
%+N\|f\|_{L_\infty(\Omega)},
\end{theorem}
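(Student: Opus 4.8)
The plan is to run the same perturbative scheme used for Theorems \ref{thm2} and \ref{thm3}, but to replace the power-type (geometric) decay by a Dini summation. Existence and uniqueness of the weak solution $u\in H^1(\Omega)$ follow exactly as before: the bilinear form associated with $\mathcal{L}$, together with the weak encoding of the flux jumps, is coercive on $H^1_0(\Omega)$ by strong ellipticity, so Lax--Milgram applies and the boundedness hypothesis on $A^{kl}$ and the data makes every term finite. Thus the only genuinely new content is the piecewise $C^1$ regularity up to the interfaces, which I would obtain from a mean-oscillation decay estimate for $Du$. I would localize at an arbitrary $x_0\in\overline{\Omega}$; the interior and the fixed-boundary cases are standard for operators of $L_2$-Dini mean oscillation, so the essential case is $x_0\in\partial\Omega_j$. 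After a $C^{1,\text{Dini}}$ change of variables flattening $\partial\Omega_j$ near $x_0$ to a piece of hyperplane, it suffices to treat the model geometry of a flat interface, provided one checks that the transformed problem still satisfies the Dini hypotheses (see the obstacle below).

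In the flat model, at each dyadic scale $r$ I would introduce the comparison function $v$ solving the homogeneous constant-coefficient transmission problem on $\Omega_r(x_0)$: freeze each $A^{kl}$ to its piecewise constant average on the two sides, drop $f$, replace $F$ and the flux datum by their limiting values, and set $v=u$ on $\partial\Omega_r(x_0)$. Since $v$ solves a constant-coefficient system across a flat interface, it enjoys sharp one-sided $C^{1,\gamma}$ estimates for some $\gamma=\gamma(d,n,\kappa)\in(0,1)$, so its gradient obeys the excess decay
\begin{equation*}
\inf_{q}\Big(\aint_{\Omega_{\rho}(x_0)}|Dv-q|^2\,dx\Big)^{1/2}
\le C\Big(\tfrac{\rho}{r}\Big)^{\gamma}\inf_{q}\Big(\aint_{\Omega_{r}(x_0)}|Dv-q|^2\,dx\Big)^{1/2},
\qquad 0<\rho\le r,
\end{equation*}
where the infima run over $q$ that are constant on each side of the flat interface. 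A standard energy comparison bounds $\|D(u-v)\|_{L_2(\Omega_r(x_0))}$ by a single quantity $\tilde\omega(r)$ collecting $\omega_A(r)$, the Dini moduli of $F$ and of the flux data $g_j$, the $C^{1,\text{Dini}}$ modulus of the interface, and $\|f\|_{L_\infty}r$. Writing $\phi(x_0,r)$ for the excess of $Du$ at scale $r$, the two estimates combine into the iteration inequality
\begin{equation*}
\phi(x_0,\rho)\le C\Big(\tfrac{\rho}{r}\Big)^{\gamma}\phi(x_0,r)+C\,\tilde\omega(r).
\end{equation*}

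Choosing $\rho=\theta r$ with $\theta$ small so that $C\theta^{\gamma}\le 1/2$ and iterating from a fixed scale $r_0$, the Dini summability of $\tilde\omega$ guarantees that $\sum_k\phi(x_0,\theta^k r_0)<\infty$ and that the optimal one-sided constant gradients form a Cauchy sequence. This produces a well-defined pair of one-sided limits of $Du$ at $x_0$ with a modulus of continuity controlled by $\tilde\omega$, uniformly in $x_0$; hence $u$ is piecewise $C^1$ up to each interface, which is the assertion of Theorem \ref{thm4}.

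The main obstacle I anticipate is the interplay between the $C^{1,\text{Dini}}$ flattening and the Dini-mean-oscillation hypothesis on $A^{kl}$. The flattening diffeomorphism $\Phi$ is only $C^{1,\text{Dini}}$, so its Jacobian $D\Phi$ is merely Dini continuous, and the transformed coefficients are essentially products of the Dini-continuous factors coming from $D\Phi$ with the original $A^{kl}$. One must verify that such a product is again of piecewise $L_2$-Dini mean oscillation: writing the oscillation of the product in terms of the pointwise oscillation of the $D\Phi$-factors and the mean oscillation of $A^{kl}$ shows its modulus is bounded by a sum of two Dini moduli, hence Dini, but this bookkeeping must be done carefully and uniformly. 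Unlike the Hölder case, where each error term decays as a fixed power of $r$ and geometric summation is automatic, here one only has summability, so the iteration has to be carried out quantitatively to yield an honest modulus of continuity for $Du$ rather than mere local boundedness.
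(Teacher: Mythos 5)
Your plan is correct in outline, but it takes a genuinely different---and considerably longer---route than the paper. You attack the transmission problem head-on: flatten the interface, freeze the coefficients piecewise on the two sides, compare with the two-phase constant-coefficient (laminate) problem, and run a Dini--Campanato iteration with piecewise-constant comparison gradients. This amounts to re-proving, with the flux jump $g_j$ carried along, the main result of \cite{dx} (itself the Dini-mean-oscillation analogue of \cite{D12}). The paper spends almost no effort at the interface: it reuses the reduction from Theorems \ref{thm2} and \ref{thm3}, solving in each $\Omega_j$ the auxiliary conormal problem \eqref{eq1.45b} and absorbing $g_j$ into a divergence-form datum $\tilde F=1_{\cup_j\Omega_j}F-\sum_{j}1_{\Omega_j}\nabla w_j$, so that the reduced problem \eqref{eq1.50b} has no flux jumps and is exactly the composite-material setting of \cite[Theorem 1.1]{dx}, which is then quoted. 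The only new analysis in the paper is Lemma \ref{lem1}: the auxiliary solution $w_j$, whose gradient need not be Dini continuous in the $L_\infty$ sense (it is merely $C^1$ by \cite{Lieb87}), nevertheless has gradient of $L_2$-Dini mean oscillation---a single-domain conormal estimate, proved by the same iteration scheme you propose but in a half-ball with no interface present, using the extension $g^d(x)=g(x')$, $D_d g^d=0$, to convert the Neumann datum into a divergence-form source. In short, your approach buys self-containedness (no appeal to \cite{dx}) and treats $g_j$ without auxiliary problems; the paper's buys economy, confining all new work to a lemma in which the interface has disappeared.

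One point you should repair if you carry your plan out: the energy comparison does not yield $\|D(u-v)\|_{L_2(\Omega_r(x_0))}\le \tilde\omega(r)$ with $\tilde\omega$ a fixed Dini function. The coefficient-freezing error is of the form $\omega_A(r)\,\|Du\|_{L_\infty(\Omega_{2r}(x_0))}$ (compare \eqref{eq1323th} and \eqref{eq0750f}), so the right-hand side of your iteration inequality involves the a priori unknown quantity $\|Du\|_{L_\infty}$. One must first establish local boundedness of $Du$ before a modulus of continuity can be extracted; this is precisely where the iteration argument of \cite{DLK}, which the paper invokes, does real work. Your closing remark about carrying out the iteration ``quantitatively'' gestures at this but understates it: the issue is not merely quantitative bookkeeping, it is that the error term depends on the solution itself and must be absorbed.
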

We note that the piecewise $L_2$-Dini mean oscillation condition is weaker than the usual piecewise Dini continuity condition in the $L_\infty$ sense.

\section{Proofs}

The idea of the proof is to reduce the transmission problem to an elliptic equation (system) with piecewise H\"older (or Dini) nonhomogeneous terms, by solving a conormal boundary value problem. These equations arose from composite material and have been extensively studied in the literature. See, for instance, \cite{lv,ln}, and also recent papers \cite{D12, dx}. We will apply the results in the latter two papers, the proofs of which in turn are based on Campanato's approach.

\begin{proof}[Proof of Theorem \ref{thm2}]
Let $w\in H^1(\Omega)$ be the weak solution to the conormal boundary value problem
\begin{equation}
                                        \label{eq1.45}
\begin{cases}
\Delta w=c\quad &\text{in}\ \ \Omega_1,\\
w_\nu =g\quad &\text{on}\ \ \partial\Omega_1,\\
\int_{\Omega_1}w\,dx=0,
\end{cases}
\end{equation}
where $c=-|\Gamma|^{-1}\int_{\Gamma}g$ is a constant. The existence and uniqueness of such solution $w$ follows from the trace theorem and the Lax--Milgram theorem, and
\begin{equation}
                            \label{eq2.08}
\|w\|_{H^1(\Omega_1)}\le N\|g\|_{L_2(\Gamma)},
\end{equation}
where $N=N(d,\Omega_1)$. Since $g\in C^\alpha(\Omega_1)$, by the classical elliptic theory (see, for instance, \cite[Theorem 5.1]{Lieb87}), we have
\begin{equation}
                                        \label{eq1.46}
\|w\|_{C^{1,\alpha}(\Omega_1)}\le N\|g\|_{C^\alpha(\Gamma)},
\end{equation}
where $N=N(d,\alpha,\Omega_1)$. By using the weak formulation of solutions, from \eqref{eq1.45} it is easily seen that \eqref{systems} is equivalent to
\begin{equation}\label{eq1.50}
\begin{cases}
\mathcal{L}u=\Div \tilde F+\tilde f\quad &\text{in}\ \ \Omega,\\
u=0\quad &\text{on}\ \ \partial\Omega,
\end{cases}
\end{equation}
where
$$
\tilde F=1_{\Omega_1\cup\Omega_2}F-1_{\Omega_1}\nabla w,\quad \tilde f=f+1_{\Omega_1}c.
$$
By the Lax--Milgram theorem, there is a unique solution $u\in H^1(\Omega)$ to \eqref{eq1.50} and
\begin{align}
                        \label{eq2.09}
\|u\|_{H^1(\Omega)}
&\le N\|F\|_{L_2(\Omega)}+\|\nabla w\|_{L_2(\Omega_1)}+\|f\|_{L_2(\Omega)}+\| c\|_{L_2(\Omega_1)},\nonumber\\
&\le N\|F\|_{L_2(\Omega)}+\|g\|_{L_2(\Gamma)}+\|f\|_{L_2(\Omega)},
\end{align}
where we used \eqref{eq2.08} in the second inequality.
Since $\tilde F$ and $A^{\alpha\beta}$ are piecewise $C^\alpha$,  it follows from \cite[Corollary 2 and Remark 3 (ii)]{D12}, \eqref{eq1.46}, and \eqref{eq2.09} that
\begin{align*}
&\sum_{j=1}^2\|u\|_{C^{1,\alpha}(\overline{\Omega_j})}\\
&\le N\|u\|_{L_2(\Omega)}+N\|F-\nabla w\|_{C^\alpha(\Omega_1)}
+N\|F\|_{C^\alpha(\Omega_2)}+\|f+1_{\Omega_1} c\|_{L_\infty(\Omega)}\\
&\le N\|g\|_{C^\alpha(\Gamma)}+N\sum_{j=1}^2\|F\|_{C^\alpha(\Omega_j)}+N\|f\|_{L_\infty(\Omega)}.
\end{align*}
The theorem is proved.
\end{proof}

\begin{proof}[Proof of Theorem \ref{thm3}]
The proof is similar to that of Theorem \ref{thm2}. In each $\Omega_j,j=1,\ldots,M-1$, we find a weak solution to
\begin{equation}
                                        \label{eq1.45b}
\begin{cases}
\Delta w_j=c_j\quad &\text{in}\ \ \Omega_j,\\
\partial_\nu w_j |^+_{\partial\Omega_j}=g_j\quad &\text{on}\ \ \partial\Omega_j,\\
\int_{\Omega_j}w_j\,dx=0,
\end{cases}
\end{equation}
where $c_j=-|\partial\Omega_j|^{-1}\int_{\partial\Omega_j}g_j$, and $w_j$ satisfies
\begin{equation}
                                        \label{eq1.46b}
\|w_j\|_{C^{1,\alpha'}(\Omega_j)}\le N\|g_j\|_{C^{\alpha'}(\partial\Omega_j)}.
\end{equation}
By using the weak formulation of solutions, it is easily seen that \eqref{eq2.36} is equivalent to
\begin{equation}\label{eq1.50b}
\begin{cases}
\mathcal{L}u=\Div \tilde F+\tilde f\quad &\text{in}\ \ \Omega,\\
u=0\quad &\text{on}\ \ \partial\Omega,
\end{cases}
\end{equation}
where
$$
\tilde F=1_{\cup_{j=1}^M \Omega_j}F-\sum_{j=1}^{M-1}1_{\Omega_j}\nabla w_j,\quad
\tilde f=f+\sum_{j=1}^{M-1}1_{\Omega_j}c_j.
$$
As before, by the Lax--Milgram theorem, there is a unique solution $u\in H^1(\Omega)$ to \eqref{eq1.50b}. Since $\tilde F$ and $A^{\alpha\beta}$ are piecewise $C^{\alpha'}$ and $\partial\Omega_j$ is piecewise $C^{1,\alpha}$, by using \eqref{eq1.46b} and appealing to \cite[Corollary 1.2 and Remark 1.4]{dx}, we conclude the proof of the theorem.
\end{proof}

Finally, we give

\begin{proof}[Proof of Theorem \ref{thm4}]
We claim that under the conditions of the theorem, if $w_j$ is the solution to \eqref{eq1.45b}, then $D w_j$ satisfies the $L_2$-Dini mean oscillation condition in $\Omega_j$. Assuming this is true, then the conclusion of the theorem follows from the proof of Theorem \ref{thm3} and \cite[Theorem 1.1]{dx}. We remark that the $C^1$ continuity of $w_j$ was proved in \cite[Theorem 5.1]{Lieb87} for more general quasilinear equations, but in general $D w_j$ may not be Dini continuous in the $L_\infty$ sense.

To prove the claim, we follow the argument in the proof of Theorem 1.7 of \cite{DLK}. We only give the boundary estimate since the corresponding interior estimate is simpler. By using the $C^{1,\text{Dini}}$ regularity of $\Omega_j$ and locally flattening the boundary, it then suffices to verify Lemma \ref{lem1} below.
\end{proof}

In the sequel, we denote $x=(x',x^d)$, where $x'=(x_1,x_2,\ldots,x_{d-1})\in \bR^{d-1}$, and $\Gamma_r(x):=B_r(x)\cap \{x_d=0\}$ for $x\in \bR^d$ and $r>0$.
\begin{lemma}
                                    \label{lem1}
Let $u\in H^1(B_4^+)$ be a weak solution to
$$
D_k(a^{kl}D_l u)=0\quad \text{in}\ \  B_4^+
$$
with the conormal boundary condition $a^{dl}D_l u=g(x')$ on $\Gamma_4=B_4\cap \{x_d=0\}$, where $a^{kl}=a^{kl}(x)$ satisfy the uniform ellipticity condition and are of $L_2$-Dini mean oscillation, and $g$ is a Dini continuous function on $\Gamma_4$. Then $Du$ is of $L_2$-Dini mean oscillation in $\overline{B_1^+}$.
\end{lemma}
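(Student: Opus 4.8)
The plan is to run a Campanato-type perturbation argument at the flat boundary, following the scheme in the proof of \cite[Theorem 1.7]{DLK}. Fix a boundary point $x_0\in\Gamma_1$ and a radius $r$, and freeze both the coefficients and the boundary datum. Let $\bar a=\bar a(x_0,r)$ be a constant matrix realizing (up to a fixed factor) the infimum defining $\omega_A(r)$ on $B_r^+(x_0)$, and let $\bar g=(g)_{\Gamma_r(x_0)}$. Since $\bar a^{dd}$ is invertible by ellipticity, the affine map $\ell(x)=(\bar a^{dd})^{-1}\bar g\,x_d$ satisfies $\bar a^{dl}D_l\ell=\bar g$ on $\{x_d=0\}$. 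I then let $v$ solve the frozen conormal problem $D_k(\bar a^{kl}D_l v)=0$ in $B_r^+(x_0)$, $\bar a^{dl}D_l v=\bar g$ on $\Gamma_r(x_0)$, with $v=u$ on the spherical part of $\partial B_r^+(x_0)$, so that $v-\ell$ solves the homogeneous constant-coefficient conormal problem.

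Second, I would record the decay estimate for the frozen problem: by the regularity theory for constant-coefficient conormal problems, $D(v-\ell)$ is smooth up to the flat boundary, so, using that $\ell$ has constant gradient,
\[
\Big(\aint_{B_\rho^+(x_0)}\abs{Dv-(Dv)_{B_\rho^+(x_0)}}^2\Big)^{1/2}\le C\,\frac{\rho}{r}\Big(\aint_{B_r^+(x_0)}\abs{Dv-(Dv)_{B_r^+(x_0)}}^2\Big)^{1/2}
\]
for all $\rho\le r$. For the comparison, subtracting the weak formulations of $u$ and $v$ and testing against $h=u-v$ (which vanishes on the spherical part) gives, after using ellipticity on the left, Hölder on the interior term, and the trace inequality together with Poincar\'e and Young on the boundary term,
\[
\Big(\aint_{B_r^+(x_0)}\abs{Dh}^2\Big)^{1/2}\le C\Big(\aint_{B_r^+(x_0)}\abs{(a-\bar a)Du}^2\Big)^{1/2}+C\,\omega_g(r),
\]
where $\omega_g$ denotes the Dini modulus of $g$; note that the coefficient-oscillation contribution on $\Gamma_r(x_0)$ is automatically absorbed into the interior term by working directly with the weak formulations.

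Third, to convert the first term into one controlled by $\omega_A$, I would invoke a reverse H\"older (Gehring--Meyers) inequality for $Du$ up to the flat boundary, so that $Du\in L_{2+\varepsilon}$ locally; then H\"older's inequality and the uniform bound $\abs{a-\bar a}\le 2\kappa^{-1}$ yield
\[
\Big(\aint_{B_r^+}\abs{(a-\bar a)Du}^2\Big)^{1/2}\le C\,\omega_A(r)^{\varepsilon/(2+\varepsilon)}\Big(\aint_{B_{2r}^+}\abs{Du}^{2+\varepsilon}\Big)^{1/(2+\varepsilon)}.
\]
Combining this with the decay estimate and writing $\phi(x_0,r)=\inf_{q}\big(\aint_{B_r^+(x_0)}\abs{Du-q}^2\big)^{1/2}$, I obtain an iteration inequality $\phi(x_0,\mu r)\le C\mu\,\phi(x_0,r)+C(\mu)\big(\tilde\omega_A(r)\,\mathcal M+\omega_g(r)\big)$, where $\tilde\omega_A(r)=\omega_A(r)^{\varepsilon/(2+\varepsilon)}$ is again a Dini modulus and $\mathcal M$ bounds the relevant $L_{2+\varepsilon}$-average of $Du$. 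Choosing $\mu\in(0,1)$ small so that $C\mu\le\tfrac12$ and iterating over dyadic radii, a standard summation lemma controls the mean oscillation of $Du$ by a constant multiple of $\sum_k\big(\tilde\omega_A(\mu^k r)+\omega_g(\mu^k r)\big)$, whose integral against $dr/r$ is finite since $\tilde\omega_A$ and $\omega_g$ are Dini. Taking the supremum over $x_0\in\overline{B_1^+}$, and combining with the easier interior estimate, shows $Du$ is of $L_2$-Dini mean oscillation in $\overline{B_1^+}$.

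The main obstacle I anticipate is concentrated in the third step. First, converting the $L_2$ oscillation of the coefficients into control of the product $(a-\bar a)Du$ relies on the self-improving higher integrability of $Du$, and one must check that the resulting modulus $\tilde\omega_A$ remains Dini. Second, and more delicate, closing the iteration requires propagating a uniform bound $\mathcal M$ on the $L_{2+\varepsilon}$-averages of $Du$ across all scales rather than assuming it a priori; this is obtained by simultaneously iterating the frozen averages $(Du)_{B_r^+(x_0)}$ to show they converge (so that $Du$ is in fact bounded and continuous up to the boundary), after which the geometric-series summation transfers the Dini property of the data to $Du$ with the correct constants.
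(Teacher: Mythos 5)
Your overall scheme (freeze the coefficients and the datum, compare with a constant-coefficient conormal problem, and iterate the resulting oscillation decay as in \cite{DLK}) is the same as the paper's. The differences in the comparison step are harmless: the paper solves the correction problem purely with conormal data on a smooth domain $\cD_r(\bar x)$ satisfying $B_{r/2}^+\subset\cD\subset B_1^+$ (avoiding the corner of the half-ball), and handles the boundary datum by extending $g$ to $g^d(x)=g(x')$ so that it enters the equation as a divergence $D_dg^d$, whereas you use a mixed Dirichlet/conormal problem on $B_r^+(x_0)$ and an affine corrector $\ell$; either variant can be made to work.

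The genuine gap is in your third step. To control $\aint_{B_r^+}|(a-\bar a)Du|^2$ you invoke Meyers' higher integrability and H\"older's inequality, which produces the modulus $\tilde\omega_A(r)=\omega_A(r)^{\varepsilon/(2+\varepsilon)}$. But a power $\theta\in(0,1)$ of a Dini modulus need not be Dini: take $\omega_A(r)=|\log r|^{-2}$, which satisfies the Dini condition, while $\omega_A(r)^{\theta}=|\log r|^{-2\theta}$ fails it whenever $2\theta\le 1$; since the Meyers exponent $\varepsilon$ is small, $\theta=\varepsilon/(2+\varepsilon)$ is always below $1/2$, so this loss is unavoidable on your route. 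Consequently $\sum_k\tilde\omega_A(\mu^k r)$ can diverge, the dyadic iteration does not close, and the bootstrap you sketch in the final paragraph (iterating the averages to deduce that $Du$ is bounded and continuous, then summing) is circular: it requires exactly the summability that the H\"older step destroyed. You flagged this as something ``one must check,'' but it is not a technical check --- it is where the argument breaks. The paper avoids the problem by never applying H\"older to the product: it bounds the correction by $N\omega_A(2r)\|Du\|_{L^\infty(B_{2r}^+(\bar x))}+N\omega_g(2r)$, keeping $\omega_A$ to the \emph{first} power, and then runs the iteration of \cite[Theorem 1.7]{DLK}, which establishes the needed local $L^\infty$ bound on $Du$ simultaneously with the oscillation decay. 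Note that one cannot shortcut this by quoting Lieberman's theorem for the $L^\infty$ bound, since here the coefficients are only of $L_2$-Dini mean oscillation rather than Dini continuous in the $L^\infty$ sense.
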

\begin{proof}
We set
\[
g^d(x)= g^d(x',x^d):=g(x'),
\]
which satisfies $D_d g^d=0$.
Therefore, the above problem is reduced to the standard conormal boundary problem
$$
\begin{cases}
D_k(a^{kl}D_l w)=D_d g^d\quad &\text{in}\ \  B_4^+\\
a^{dl}D_l w=g^d \quad&\text{on}\ \ \Gamma_4.
\end{cases}
$$
Similar to \cite[Section 3]{DLK}, for $x\in \overline{B_3^+}$ and $r\in (0,1)$, we define
\[
\phi(x,r) :=\left(\aint_{B_r(x)\cap B_4^+} |Du-(Du)_{B_r(x)\cap B_4^+}|^2 \right)^{\frac1 2},
\]
where
$$
(Du)_{B_r(x)\cap B_4^+}=\aint_{B_r(x)\cap B_4^+} Du.
$$
Fix a smooth domain $\cD$ satisfying
\begin{equation*}%				\label{setD}
B_{1/2}^{+} \subset \cD \subset B_1^{+}
\end{equation*}
and for $\bar x \in \partial \bR^d_{+}$, we set $\cD_r(\bar x)= r \cD+ \bar x$.
We decompose $u=w+v$, where $w \in H^{1}(\cD_{r}(\bar x))$ is a weak solution of the problem
\begin{align*}
\begin{cases}
D_k(\bar a^{kl} D_l w) = - D_k((a^{kl} - \bar a^{kl})D_l u) +D_d(g^d - \bar{g}^d) \quad &\text{in} \ \ \cD_r(\bar x),\\
\bar a^{kl} D_l w \nu_k = -(a^{kl} - \bar a^{kl})D_l u \nu_k +(g^d - \bar{g}^d)\nu_d \quad &\text{on}\ \ \partial \cD_r(\bar x),
\end{cases}
\end{align*}
where $\bar a^{kl}$ and $\bar g^d$ are the average of $a^{kl}$ and $g^d$ in $\cD_{r}(\bar x)$, respectively. By the $H^1$-estimate, we have
\begin{equation}				\label{eq1323th}
\left(\aint_{B_r^{+}(\bar x)} \abs{Dw}^2\right)^{1/2} \le N\omega_{A}(2r)\,\norm{Du}_{L^{\infty}(B^{+}_{2r}(\bar x))}+N\omega_{g}(2r).
\end{equation}
Note that $v:= u-w$ satisfies
\[
D_k( \bar{a}^{kl} D_l v)= D_d  {\bar g^d} \;\mbox{ in } \; B_r^{+}(\bar x),
\quad \bar {a}^{dl} D_l v = \bar{ g}^d \;\mbox{ on }\; \Gamma_r(\bar x).
\]
Then for any $c \in \bR$ and $k=1, 2, \ldots, d-1$, $\tilde v:= D_k v - c$
satisfies
\[
D_k( \bar{  a}^{kl} D_l \tilde v)= 0 \;\mbox{ in } \;  B_r^{+}(\bar x), \quad \bar {a}^{dl} D_l \tilde v= 0 \;\mbox{ on }\; \Gamma_r(\bar x).
\]
By the standard elliptic estimates for equations with constant coefficients and zero conormal boundary data, we have
for any $c\in \bR$,
\[
\norm{DD_kv}_{L^\infty(B_{r/2}^+(\bar x))} \le Nr^{-1}\left( \aint_{B_r^+(\bar x)} \abs{D_{k}v-c}^2 \right)^{1/2},\quad  k=1, \ldots, d-1.
\]
Then by using $\displaystyle D_{dd}v=-\frac{1}{\bar{a}^{dd}}\sum_{(i,j)\neq (d,d)} \bar{a}^{ij}D_{ij}v$,
we obtain
\[
\norm{D^2v}_{L^{\infty}(B_{r/2}^+(\bar x))} \le N \norm{D D_{x'} v}_{L^{\infty}(B_{r/2}^+(\bar x))} \le Nr^{-1}\left( \aint_{B_r^+(\bar x)} \abs{D_{x'}v- c}^2 \right)^{1/2},
\]
where we used the notation $D_{x'} v= (D_1 v, \ldots, D_{d-1} v)$.
Therefore, we have
\[
\norm{D^2v}_{L^{\infty}(B_{r/2}^+(\bar x))} \le Nr^{-1}\left( \aint_{B_r^+(\bar x)} \abs{Dv- q}^2 \right)^{1/2},\quad \forall  q \in \bR^{d}.
\]
Let $\mu \in (0,1/2)$ be a small number.
Since
\[
\left( \aint_{B_{\mu r}^+(\bar x)} \Abs{Dv-(Dv)_{B_{\mu r}^+(\bar x)}}^2 \right)^{1/2} \le 2\mu r \norm{D^2 v}_{L^{\infty}(B_{\mu r}^+(\bar x))},
\]
we see that there is a constant $N_0=N_0(d, \kappa)>0$ such that
\[
\left( \aint_{B_{\mu r}^+(\bar x)} \Abs{Dv-(Dv)_{B_{\mu r}^+(\bar x)}}^2 \right)^{1/2} \le N_0\mu \left( \aint_{B_r^+(\bar x)} \abs{Dv- q}^2 \right)^{1/2},\quad \forall  q \in \bR^d.
\]
By using the decomposition $u = v+w,$ we obtain from the above and the triangle inequality that
\begin{align*}
&\left( \aint_{B_{\mu r}^+(\bar x)}\Abs{Du-(Dv)_{B_{\mu r}^+(\bar x)}}^2 \right)^{1/2}\\
&\le \left( \aint_{B_{\mu r}^+(\bar x)} \Abs{Dv-(Dv)_{B_{\mu r}^+(\bar x)}}^2 \right)^{1/2}+
\left( \aint_{B_{\mu r}^+(\bar x)} \abs{Dw}^2 \right)^{1/2}\\
&\le N_0 \mu \left( \aint_{B_r^{+}(\bar x)} \abs{Du- q}^2 \right)^{1/2}+N\mu^{-d/2}\left( \aint_{B_r^{+}(\bar x)} \abs{Dw}^2 \right)^{1/2}. 	\end{align*}
By setting $ q =(Du)_{B^+_r(\bar x)}$ and using \eqref{eq1323th}, we obtain
\begin{equation}				\label{eq0750f}
\phi(\bar{x},\mu r) \le N_0 \mu\,\phi(\bar x, r) + N\mu^{-{d}/2}\left( \omega_{{A}}(2r)\norm{Du}_{L^{\infty}(B_{2 r}^+(\bar x))}+\omega_{ g}(2r) \right).
\end{equation}
By using an iteration argument as in the proof of \cite[Theorem 1.7]{DLK}, from \eqref{eq0750f} and the corresponding interior estimate, it is easily seen that $Du$ is of $L_2$-Dini mean oscillation in $\overline{B_1^+}$ with a modulus of continuity depending on $d$, $\kappa$, $\|Du\|_{L_2(B^+_4)}$, $\omega_g$, and $\omega_A$. The lemma is proved.
\end{proof}

\end{document}